\newtheorem{theorem}{Theorem}
\newtheorem{corollary}{Corollary}
\begin{document}

\title{Optimization in a non-linear Lanchester-type model\\ involving supply units}
\author{\firstname{H. N.}~\surname{Nguyen}}
\email{nguyenhongnam1977@gmail.com}
\affiliation{%
Department of Mathematics, Faculty of Information Technology, Le Quy Don Technical University
}%

\author{\firstname{M. A.}~\surname{Vu}}
\email{anhmy7284@gmail.com}
\affiliation{%
Department of Mathematics, Faculty of Information Technology, Le Quy Don Technical University
}%

\author{\firstname{D. M.}~\surname{Hy}}
\email{ducmanhktqs@gmail.com}
\affiliation{%
Department of Mathematics, Faculty of Information Technology, Le Quy Don Technical University
}%

\author{\firstname{N. A.}~\surname{Ta}}
\email{tangocanh@gmail.com}
\affiliation{%
Department of Mathematics, Faculty of Information Technology, Le Quy Don Technical University
}%

\author{\firstname{A. T.}~\surname{Do}}
\email{doanhtuanktqs@gmail.com}
\affiliation{%
Department of Mathematics, Faculty of Information Technology, Le Quy Don Technical University
}%

\begin{abstract}
In this paper, a non-linear Lanchester-type model involving supply units is introduced. The model describes a battle where the Blue party consisting of one armed force $B$ is fighting against the Red party. The Red party consists of $n$ armed forces each of which is supplied by a supply unit.
A new variable called "fire allocation" is associated to the Blue force, reflecting its strategy during the battle.
A problem of optimal fire allocation for Blue force is then studied. The optimal fire allocation of the Blue force allows that the number of Blue troops is always at its maximum. It is sought in the form of a piece-wise constant function of time with the help of "threatening rates" computed for each agent of the Red party. Numerical experiments are included to justify the theoretical results.
\end{abstract}
\maketitle
\section{Introduction}
In 1916, Lanchester (\cite{Lan}) introduced a mathematical model for
a battle in the form of a system of differential equations two
unknowns of which  are the number of the two involved parties. In
1962, Deitchman (\cite{Dei}) extended Lanchester's model by
investigating battle between an army and a guerilla force. This
model is called a guerilla warfare model or an asymmetric model. In
this model, the fire of guerilla force is supposed to be aimed while
of the army is unaimed. Later, Schaffer (\cite{Sch}) and  Schreiber
(\cite{Sch1}) generalized Deitchman's model further by taking into
account the intelligence and considered the problem of optimizing
the fire allocation of the army. Recently, Kaplan, Kress and
Szechtman (KKS) (\cite{Kap},\cite{Kre}) also considered Lanchester
model with intelligence in a scenario of counter-terrorism. In this
asymmetric model, intelligence play a decisive role in the outcome
of the combat. In addition, a lot of researchers are interested in
optimization problems involving warfare models. In 1974, Taylor
(\cite{Tay}) studied several problems of optimizing the fire
allocation for some warfare models. Lin and Mackay (\cite{Mac})
extended Taylor's results on optimization of fire allocation for
Lanchester's model of the form one against many. A common interest
of these two studies is optimizing the number of troops. Feichtinger
and his colleagues (\cite{Fei1}) studied an optimization problem for
KKS model with objective function being the cost of the battle,
control variables being intelligence and reinforcement. In 2019, we
investigated a modified asymmetric Lanchester $(n,1)$ model
describing a combat between a group of $n$ counter-terrorism forces
and a single group of terrorists, see (\cite{Manh}). In these works
above, the role of military supply has not been studied thoroughly.
In a combat, the victory of one party is not only decided by the
armed forces but also by the supply units. In 2017, Kim and his
colleagues (\cite{Kim}) considered a Lanchester's model where one of
the party is supported by a supply unit. Besides, there have been
multi-party combats where a Blue force $B$ fights against $n$
independent forces of the Red party: $R^1, R^2,..., R^n$ and each of
these forces is supported by one of $n$ different supply agents
$A^1, A^2,...,A^n$. The supply units do not fights the Blue force
directly. However, their supports for $R^1, R^2,..., R^n$ may affect
both the progress and the outcome of a combat. One such combat is
the civil war in Syria, starting in 2011. In order to investigate
such war, we introduce a novel model of non-linear Lanchester's
type. In Lanchester's model using system of differential equations,
the rate of troops decreasing of a force is computed by attrition
rate of its rival force multiplied by the rival's number of troops.
In our model, attrition rate of $R^i :\,i=1,2,...,n$ is assumed to
be a linear function of the number of its supply unit and this
supply unit can also be attrited by $B$. Let us recall that in
classical non-linear Lanchester's model, the supply units have not
been taken into account but only the armed forces. Besides, in our
model, a fire allocation is added to the Blue force. This
distribution is used to express the strategy of $B$ during the
conflict. The allocation is in the form of $2n$ non-negative number,
whose sum equals to $1$, multiplied by $B$'s attrition rates with
respect to $R^i$ and $A^i:\,i=1,..,n$. This allocation obviously has
an impact on the dynamics of the system of differential equations
and hence the progress and outcome of the combat. For this novel
model, we consider the problem of optimal fire allocation for $B$,
thus, we seek for fire allocation such that the remaining troops of
$B$ at any time is maximum. The optimal fire allocation is derived
using the so-called "threatening rates", which are computed for
$R^i$ and $A^i:\,i=1,..,n$. Numerical experiments
have justified the theoretical findings. \\
The rest of the paper is
organized as follows.  Section 2 is devoted to present model
settings and to investigate the optimization problem for this model.
Numerical experiments are presented in Section 3 to
illustrate the theoretical results. Conclusion and some possible further developments are discussed in the last section.
\section{Main Results}
\subsection{The Model}
Let us consider a combat between a Blue party and a Red party and
assume that each agent of the Red party is supplied by a corresponding supply
unit.
We use the following notations:
\begin{itemize}
\item$B$: Blue force.
\item$R^i(i=1,\ldots ,n)$: $i$-th agent of the Red party.
\item$A^j(j=1,\ldots ,n)$: corresponding supply agents for $R^j$ forces.
\item$r_{R^i}$: an attrition rate of $B$ to $R^i$.
\item $r_{A^j}$: an attrition rate of $B$ to $A^j$.
\item $f_{\alpha^i}$: an attrition function of $A_i$ complementing $R^i$ to $B$.
\item $P = \left( p_
1,\ldots,p_i,\ldots , p_n,p_{n+1},\ldots,p_{n+j},\ldots , p_{2n} \right)$: the fire allocating proportion of $B$ to $R^i$ and $A^j \, (i,j=1,\ldots , n)$ respectively.
\item $\alpha^i_c:$ the fully-connected attrition rate of $R^i$ with $A^i$.
\item $\alpha^i_d:$ the fully-disconnected attrition rate of $R^i$ with $A^i$ $(\alpha^i_d\leq\alpha^i_c)$.
\end{itemize}

We denote this model as $\left( {B \texttt{ vs } ((R^1,A^1),\ldots,(R^n,A^n))} \right).$ The diagram for this model is presented in Figure \ref{hinh1}.
\begin{figure}
\includegraphics[scale=1]{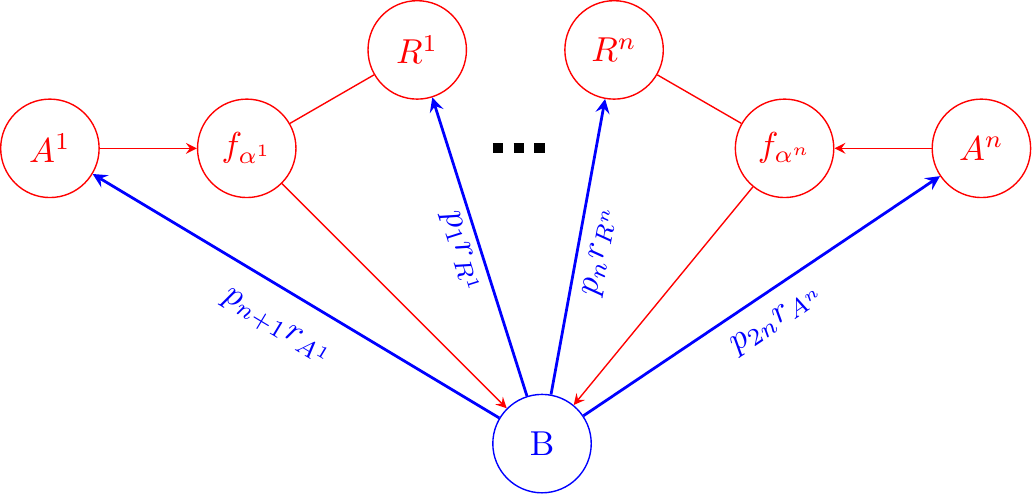}
 \caption{Diagram for the model $\left( {B \texttt{ vs } ((R^1,A^1),\ldots,(R^n,A^n))} \right).$}
 \label{hinh1}
 \end{figure}
Let us consider the problem of finding the optimal fire allocation
of $B$ such that at any time $t$, the remaining troops of $B$ is
maximized. We seek for the optimal fire allocating proportion of $B$
in the form of a piece-wise constant function. This choice is
realistic since it is absurd to alter the fire allocation
constantly, especially during a certain stage of the battle. For
this purpose, we assume that $P = \left(p_ 1,\ldots ,
p_n,p_{n+1},\ldots, p_{2n} \right)$ is a piece-wise constant
proportion where $p_{1},\ldots,p_{2n}\in \left[ {0;1}
\right]:\,\sum\limits_{k=1}^{2n}p_k = 1$  at any time $t$.

The attrition rates of $R^i$ to $B$ is supposed to be entirely
dependent on its corresponding supply unit. Thus the complementing
attrition functions are assumed to be linear ones of the form:
\begin{equation*}
\begin{aligned}
f_{\alpha^i} & = \alpha _d^i + (\alpha _c^i - \alpha
_d^i)\frac{A^i}{A^i_0}: i=1,\ldots,n.
\end{aligned}
\end{equation*}
where  $A^i_0$ number of $A^i$'s troops at the beginning. Let us observe that, at the beginning, when $A^i=A^i_0$, $R^i$ and $A^i$ has a full connection and $f_{\alpha^i}$ attains its maximal value $\alpha _c^i$. When $A^i$ is totally eliminated by $B$, $A^i=0$, the connetion between $R^i$ and $A^i$ is terminated and $f_{\alpha^i}$ is now only $\alpha _d^i$.

 The numbers of troops of all the parties involved in the battle are governed by the following system of differential equations:
\begin{equation}
\begin{cases}
\frac{dB}{dt} =  - \sum \limits_{i = 1}^n \left[ \alpha _d^i + \left( \alpha _c^i - \alpha _d^i \right) \frac{A^i}{A_0^i} \right] R^i,\\
\frac{dR^i}{dt} =  - p_{i} r _{R^i}B: i = 1,\ldots , n,\\
\frac{d A^j}{dt} =  - p _{n+j}r _{A^j}B: j = 1,\ldots , n.
\end{cases}
\label{model}
\end{equation}
 It is apparent that supply agents $A^1,\ldots,{A^n}$ create their impact on the outcome of the battle by influencing the attrition rate of $R^1,\ldots, R^n$ to $B$. When their numbers of troops are reduced to zero, their impacts are stopped accordingly.
 \subsection{Optimal fire allocation of Blue force}
For the model \eqref{model}, we consider the problem of maximizing the Blue force's number of troops at any time $t$.
 Let us compute the following:
 \begin{equation}\label{bi}
\begin{aligned}
b_{i} &= \alpha _c^ir _{R^i} : i=1,\ldots , n, \\
b_{n+j} &= \frac{r _{A^j}\left( \alpha _c^j - \alpha _d^j\right)R^j_0}{A_0^j}:j=1,\ldots , n.
\end{aligned}
\end{equation}
We will refer to these numbers as \textit{"threatening rates"}. These rates represent the "threats" which $R^1,\ldots, R^n$ forces and theirs supply agents expose to the Blue force. The optimal fire allocation of $B$ is pointed out in the following theorem.
\begin{theorem}\label{main}
Suppose that the optimal fire allocation $P^*$ is sought in the set $\mathcal{P} =\{ P=(p_1,\ldots ,p_n,p_{n+1},\ldots ,p_{2n}): p_k \in [0,1] \text{ is constant } \forall k=1,\ldots,2n; \sum \limits_{k=1}^{2n} p_k =1\} $. Then the optimal fire allocation of $B$ is :\\
${P^*} = \left( {0,...,0,\underbrace 1_{k},0,...,0} \right) \,where\,\,k = \mathop {\arg \max }\limits_{l = 1,...,2n} \left\{ b_l \right\}.$
\end{theorem}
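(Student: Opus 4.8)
The plan is to collapse the $(2n+1)$-dimensional system \eqref{model} into a single scalar second-order equation for the cumulative quantity $\Phi(t)=\int_0^t B(s)\,ds$, and then to reduce the optimality claim to a linear comparison estimate. First I would integrate the two families of linear equations in \eqref{model}: since $B\ge 0$ while the battle is in progress, $dR^i/dt=-p_i r_{R^i}B$ and $dA^j/dt=-p_{n+j}r_{A^j}B$ integrate to $R^i(t)=R_0^i-p_i r_{R^i}\Phi(t)$ and $A^i(t)=A_0^i-p_{n+i}r_{A^i}\Phi(t)$. Substituting these into the $B$-equation, expanding the product $[\alpha_d^i+(\alpha_c^i-\alpha_d^i)A^i/A_0^i]R^i$, and reading off the threatening rates \eqref{bi}, I expect to obtain
\begin{equation*}
\Phi''(t)=\frac{dB}{dt}=-C_0+S_1(P)\,\Phi(t)-S_2(P)\,\Phi(t)^2,\qquad \Phi(0)=0,\ \Phi'(0)=B_0,
\end{equation*}
where $C_0=\sum_i\alpha_c^i R_0^i$ is independent of $P$, the linear coefficient is $S_1(P)=\sum_{k=1}^{2n}b_k p_k$, and $S_2(P)=\sum_i(\alpha_c^i-\alpha_d^i)\frac{r_{A^i}r_{R^i}}{A_0^i}\,p_i p_{n+i}\ge 0$ collects the quadratic cross terms.

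The second step is to read off why $P^*$ is distinguished. Over the simplex $\mathcal P$ the linear functional $S_1(P)=\sum_k b_k p_k$ is maximized at the vertex placing all weight on $k=\arg\max_l b_l$, giving $S_1(P^*)=b_{\max}:=\max_l b_l$ and $S_1(P)\le b_{\max}$ for every $P$. Simultaneously, because $P^*$ has a single nonzero coordinate, every product $p_i p_{n+i}$ vanishes, so $S_2(P^*)=0$, whereas $S_2(P)\ge 0$ in general. Thus $P^*$ forces the reference trajectory $\Phi^*$ to solve the \emph{linear} equation $\Phi^{*\prime\prime}=-C_0+b_{\max}\Phi^*$, with the largest admissible linear coefficient and no quadratic drag.

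The decisive step is the comparison. Writing $w=\Phi^*-\Phi$ for an arbitrary competitor $\Phi$, subtracting the two equations and using the linearity of the $P^*$-equation gives
\begin{equation*}
w''(t)=b_{\max}\,w(t)+h(t),\qquad w(0)=0,\quad w'(0)=0,
\end{equation*}
where $h(t)=\bigl(b_{\max}-S_1(P)\bigr)\Phi(t)+S_2(P)\Phi(t)^2$. Since $\Phi(t)\ge 0$ on the interval where $B\ge 0$, and $b_{\max}-S_1(P)\ge 0$, $S_2(P)\ge 0$, the forcing satisfies $h\ge 0$. Solving this constant-coefficient linear equation by its Green's function, with $\omega=\sqrt{b_{\max}}$, yields $w'(t)=\int_0^t\cosh\!\bigl(\omega(t-s)\bigr)h(s)\,ds\ge 0$, i.e. $B_{P^*}(t)=\Phi^{*\prime}(t)\ge\Phi'(t)=B_P(t)$ for every $t$, which is exactly the assertion.

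I expect the main obstacle to be the comparison itself: for a second-order equation the \emph{increasing} (hence destabilizing) term $+b_{\max}\Phi$ normally obstructs a naive differential-inequality argument, so one cannot simply invoke monotone dependence on parameters. The point that unlocks the proof is that $P^*$ annihilates the quadratic term $S_2$, making the reference equation exactly linear; this lets $w$ satisfy an exact linear equation with sign-definite forcing $h\ge 0$ and zero Cauchy data, whence the nonnegative kernel $\cosh(\omega(t-s))\ge 0$ forces $w'\ge 0$ directly. A minor point to settle along the way is the interval of validity: the representation $R^i=R_0^i-p_i r_{R^i}\Phi$ and the sign $\Phi\ge 0$ hold as long as $B\ge 0$, and it is on this interval that the conclusion is stated; the degenerate case $b_{\max}=0$ is covered by the limiting kernel $w'(t)=\int_0^t h(s)\,ds\ge 0$.
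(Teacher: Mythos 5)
Your proposal is correct, and after the shared first step it takes a genuinely different route from the paper. Both arguments begin identically: integrate the linear equations to get $R^i=R_0^i-p_ir_{R^i}X$, $A^i=A_0^i-p_{n+i}r_{A^i}X$ with $X(t)=\int_0^tB(s)\,ds$, and substitute to obtain the quadratic scalar ODE $X''=-C_3+C_2X-C_1X^2$; your $S_1,S_2,C_0$ are exactly the paper's $C_2,C_1,C_3$, including the identification $C_2=\sum_{k=1}^{2n}b_kp_k$. The paper then integrates once more (multiplying by $dX'$) to get the closed form $B=\sqrt{-\tfrac{2}{3}C_1X^3+C_2X^2-2C_3X+C_4}$, asserts that maximizing $B(t)$ amounts to simultaneously minimizing $C_1$ and maximizing $C_2$, and solves the resulting multi-objective problem by scalarization with a case analysis over the ordering of the $b_l$, concluding that the vertex carrying the largest $b_l$ minimizes $F_\gamma$ for every $\gamma\in[0,1]$. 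You instead observe that $P^*$ attains the \emph{ideal point} of that multi-objective problem --- $S_2(P^*)=0$ because a one-hot allocation kills every cross term $p_ip_{n+i}$, and $S_1(P^*)=b_{\max}$ because a linear functional on the simplex is maximized at the vertex with the largest coefficient --- and then prove pointwise dominance directly: $w=\Phi^*-\Phi$ satisfies the exactly linear equation $w''=b_{\max}w+h$ with $h=(b_{\max}-S_1(P))\Phi+S_2(P)\Phi^2\ge 0$ and zero Cauchy data, so the nonnegative kernels $\sinh(\omega(t-s))/\omega$ and $\cosh(\omega(t-s))$ force $w,w'\ge 0$, i.e. $B_{P^*}(t)\ge B_P(t)$ on the interval where the competitor's $B_P\ge 0$. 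What your route buys is rigor at precisely the point the paper leaves heuristic: since the trajectory $X(t)$ itself depends on $P$ and the cubic under the square root is non-monotone in $X$, the paper's step from ``$C_1$ minimal, $C_2$ maximal'' to ``$B(t)$ maximal for all $t$'' is not immediate, whereas your Green's-function comparison establishes it outright; your ideal-point observation also makes the entire scalarization and vertex case analysis superfluous, since no Pareto trade-off actually arises here. What the paper's route buys is the explicit first integral (the conserved cubic), which has independent descriptive value, and a scalarization template that would still function in variants of the model where the two objectives genuinely conflict. Your caveats --- validity only while $B\ge 0$ so that $\Phi\ge 0$, and the degenerate kernel when $b_{\max}=0$ --- are handled correctly, and your implicit neglect of the state constraints $R^i,A^i\ge 0$ matches the paper's own scope.
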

\begin{proof}
Let $X(t) = \int _0^t B(s) ds$. It follows that $X' (t) = B(t)$ and
\begin{equation}
\begin{aligned}
X''(t) =B'(t) = - \sum \limits_{i = 1}^n \left[ \alpha _d^i + \left( \alpha _c^i - \alpha _d^i \right) \frac{A^i}{A_0^i} \right] R^i.
\end{aligned}
\label{d2}
\end{equation}
We also have
$$\int _0^t dR^1=-\int_0^tp_{1}r _{R^1}B(s) ds \Rightarrow R^1(t) -R^1(0) = -p_{1}r _{R^1}X(t).$$ This leads to
\begin{equation} \label{R^1t}
R^1(t) = -p_{1}r _{R^1}X(t) + R^1_0.
\end{equation}
By similar arguments, we get
\begin{eqnarray}
R^i(t) = -p_{i}r _{R^i}X(t) + R^i_0: i=1,\ldots,n, \label{R^it}\\
A^j(t) = -p_{n+j}r _{A^j}X(t) + A^j_0:j=1,\ldots,n. \label{A^it}
\end{eqnarray}
Substituting \eqref{R^it}, \eqref{A^it} into \eqref{d2} we obtain
\begin{equation}\label{d2C}
X''(t) = -C_1X^2(t) +C_2X(t) -C_3,
\end{equation}
where
\begin{equation*}
\begin{aligned}
{C_1} &= \sum\limits_{i = 1}^n {\frac{{p_{i}{p_{n+i}}{r _{R^i}}{r _{{A^i}}}\left( {\alpha _c^i - \alpha _d^i} \right)}}{{A_0^i}}}, \\
{C_2} &= \sum\limits_{i = 1}^n {\frac{{{p_{n+i}}{r _{{A^i}}}\left( {\alpha _c^i - \alpha _d^i} \right){R^i_0} + {p_{i}}{r _{R^i}}\alpha _c^iA_0^i}}{{A_0^i}}}, \\
{C_3} &= \sum\limits_{i = 1}^n {\alpha _c^i}{R^i_0}.
\end{aligned}
\end{equation*}
Multiplying both sides of \eqref{d2C} by $dX'(t)$ and integrating, one gets
\begin{equation*}
\begin{aligned}
X'(t) & = B(t) \\
& = \sqrt{-\frac{2}{3}C_1X^3(t) + C_2X^2(t) -2C_3X(t) +C_4},
\end{aligned}
\end{equation*}
where $C_4$ is an integral constant. Since $C_3$ is not changing in time while $C_1,\,C_2\geq 0,$ in order to maximize $B(t)$, we will seek for conditions for which $C_1$ is minimal and $C_2$ is maximal simultaneously. Thus, we consider the multi-objective optimization problem
\begin{equation}\label{moo}
\begin{cases}
\min \limits _{P\in{\mathcal{P}}} C_1,\\
\max\limits _{P\in{\mathcal{P}}} C_2.
\end{cases}
\end{equation}
Let us denote
\begin{equation} \label{ai}
a_i=\frac{r_{R^i}r_{A^i}(\alpha_c^{i}-\alpha_d^{i})}{A_0^i}:i=1,\ldots , n.
\end{equation}
The problem \eqref{moo} now becomes
\begin{equation}\label{moo_xyz}
\begin{array}{l}
\left\{ \begin{array}{l}
\min \sum\limits_{i = 1}^n {a_ix_iy_i}\\
\min -\left( \sum\limits_{i = 1}^n {b_{i}x_i+\sum\limits_{j = 1}^n {b_{n+j}y_j}} \right)
\end{array} \right.{\rm{ }}\\
{\rm{s}}{\rm{.t: }}\left\{ \begin{array}{l}
0 \le {x_i},{y_j} \le 1 :i,\,j=1,\ldots,n,\\
\sum\limits_{i = 1}^n  x_i+\sum\limits_{j = 1}^n  y_j = 1.
\end{array} \right.
\end{array}
\end{equation}
In order to solve the problem \eqref{moo_xyz}, we use the scalarization method. Thus, for each $\gamma \in [0.1]$ we define the function\\
$$\begin{aligned}
F_\gamma\left( {{x_1},\ldots,{x_n},{y_1},\ldots,{y_n}} \right) =  \gamma\sum\limits_{i = 1}^n {a_ix_iy_i} - \left( {1 - \gamma } \right)\left( \sum\limits_{i = 1}^n {b_{i}x_i+\sum\limits_{j = 1}^n {b_{n+j}y_j}} \right),
\end{aligned}$$\\
and consider the following problem
\begin{equation}\label{e12}
\begin{array}{l}
\min {F_\gamma\left( {{x_1},\ldots,{x_n},{y_1},\ldots,{y_n}} \right)}\\
{\rm{s}}{\rm{.t: }}\left\{ \begin{array}{l}
0 \le {x_i},{y_j} \le 1 :i,j=1\ldots,n,\\
\sum\limits_{i = 1}^n  x_i+\sum\limits_{j = 1}^n  y_j = 1.
\end{array} \right.
\end{array}
\end{equation}
By substituting $x_1=1-\sum\limits_{i =2 }^n {x_i}-\sum\limits_{j = 1}^n {y_j}$ into \eqref{e12} we obtain the following problem:
\begin{equation}
\begin{array}{l}
\min {F_\gamma }(1-\sum\limits_{i =2 }^n {x_i}-\sum\limits_{j = 1}^n {y_j},x_2,\ldots,x_n,y_1,\ldots,y_n)\\
{\rm{s}}{\rm{.t: }}\left\{ \begin{array}{l}
0 \le {x_i},{y_j} \le 1 :i=2\ldots,n,\,j=1,\ldots,n,\\
\sum\limits_{i =2 }^n {x_i}+\sum\limits_{j = 1}^n {y_j}\leq 1.
\end{array} \right.
\end{array}
\end{equation}
Without loss of generality, assume that: $b_2\geq \ldots \geq b_n\geq b_{n+1}\geq \ldots \geq b_{2n}.$\\
We consider the following cases:
\begin{itemize}
\item[1.] $b_1=\mathop {\min }\limits_{i = 1,...,2n} \left\{ b_i \right\}$\\
We have:
\begin{equation*}
\begin{aligned}
F_\gamma &=  \gamma\left( a_1y_1\left( 1-\sum\limits_{i =2 }^n {x_i}-\sum\limits_{j = 1}^n {y_j}\right) +\sum\limits_{i = 2}^n {a_ix_iy_i}\right)\\
 &-  \left( {1 - \gamma } \right)\left( b_1+\sum\limits_{i = 2}^n \left( b_i-b_1\right) x_i+\sum\limits_{j = 1}^n \left( {b_{n+j}-b_1}\right) y_j\right).\\
\end{aligned}
\end{equation*}

Since $\gamma\left( a_1y_1\left( 1-\sum\limits_{i =2 }^n {x_i}-\sum\limits_{j = 1}^n {y_j}\right) +\sum\limits_{i = 2}^n {a_ix_iy_i}\right)\geq 0$, one gets
\begin{equation*}
\begin{aligned}
\min F_\gamma \geq & - \left( {1 - \gamma } \right)\left( b_1+\sum\limits_{i = 2}^n \left( b_i-b_1\right) x_i+\sum\limits_{j = 1}^n \left( {b_{n+j}-b_1}\right) y_j\right)\\
\geq & - \left( {1 - \gamma } \right)\left( b_1+\sum\limits_{i = 2}^n \left( {b_2-b_1}\right) x_i+\sum\limits_{j = 1}^n \left( {b_2-b_1}\right) y_j\right)\\
= & - \left( {1 - \gamma } \right)\left( b_1+\left( {b_2-b_1}\right)\left( \sum\limits_{i = 2}^n x_i+\sum\limits_{j = 1}^ny_j\right)\right) \\
\geq & -(1-\gamma)(b_1+b_2-b_1)=-(1-\gamma)b_2 = F_\gamma(0,1,0,\ldots,0).\\
\end{aligned}
\end{equation*}

\item[2.]  $b_1=\mathop {\max }\limits_{i = 1,...,2n} \left\{ b_i \right\}$\\
We have:
\begin{equation*}
\begin{aligned}
F_\gamma &= \gamma\left( a_1y_1\left( 1-\sum\limits_{i =2 }^n {x_i}-\sum\limits_{j = 1}^n {y_j}\right) +\sum\limits_{i = 2}^n {a_ix_iy_i}\right)\\
&- \left( {1 - \gamma } \right)\left( b_1+\sum\limits_{i = 2}^n \left( b_i-b_1\right) x_i+\sum\limits_{j = 1}^n \left( {b_{n+j}-b_1}\right) y_j\right)\\
& =\gamma\left( a_1y_1\left( 1-\sum\limits_{i =2 }^n {x_i}-\sum\limits_{j = 1}^n {y_j}\right) +\sum\limits_{i = 2}^n {a_ix_iy_i}\right)\\
& +(1-\gamma)\left( \sum\limits_{i = 2}^n \left( b_1-b_i\right) x_i+\sum\limits_{j = 1}^n \left( b_1-b_{n+j}\right) y_j\right)-(1-\gamma)b_1. \\
\end{aligned}
\end{equation*}
 Since:
\begin{equation*}
\begin{aligned}
  \gamma\left( a_1y_1\left( 1-\sum\limits_{i =2 }^n {x_i}-\sum\limits_{j = 1}^n {y_j}\right) +\sum\limits_{i = 2}^n {a_ix_iy_i}\right)+(1-\gamma)\left( \sum\limits_{i = 2}^n \left( b_1-b_i\right) x_i+\sum\limits_{j = 1}^n \left( b_1-b_{n+j}\right) y_j\right)\geq 0,\\
\end{aligned}
\end{equation*}
 so:
$\min F_\gamma \geq -(1-\gamma)b_1= F_\gamma(1,0,0,\ldots,0).$
\item[3.]  $b_2\geq \ldots \geq b_k\geq b_1\geq b_{k+1}\geq \ldots \geq b_{2n}.$
\begin{itemize}
\item[3.1.] If $2\leq k<n,$ we have:
\begin{equation*}
\begin{aligned}
F_\gamma &= \gamma\left( a_1y_1\left( 1-\sum\limits_{i =2 }^n {x_i}-\sum\limits_{j = 1}^n {y_j}\right) +\sum\limits_{i = 2}^n {a_ix_iy_i}\right)\\
&+\left( {1 - \gamma } \right)\left( \sum\limits_{i = k+1}^n\left( b_1-b_i\right) x_i+\sum\limits_{j = 1}^n\left( b_1-b_{n+j}\right) y_j\right) -\left( 1-\gamma\right) \left(b_1+\sum\limits_{i =2 }^k\left( b_i-b_1\right) x_i\right).\\
\end{aligned}
\end{equation*}
Since:
\begin{equation*}
\begin{aligned}
 &\gamma\left( a_1y_1\left( 1-\sum\limits_{i =2 }^n {x_i}-\sum\limits_{j = 1}^n {y_j}\right) +\sum\limits_{i = 2}^n {a_ix_iy_i}\right)\\
 &+\left( {1 - \gamma } \right)\left(\sum\limits_{i = k+1}^n\left( b_1-b_i\right) x_i+\sum\limits_{j = 1}^n\left( b_1-b_{n+j}\right) y_j\right)\geq 0,\\
\end{aligned}
\end{equation*}
so:
\begin{equation*}
\begin{aligned}
\min F_\gamma &\geq -\left( 1-\gamma\right) \left(b_1+\sum\limits_{i =2 }^k\left( b_i-b_1\right) x_i\right)\\
&\geq-\left( 1-\gamma\right)\left( b_1+b_2-b_1\right)\\
&= -\left( 1-\gamma\right)b_2=F_\gamma\left( 0,1,0,\ldots,0\right).\\
\end{aligned}
\end{equation*}

\item[3.2.] If $n\leq k \leq 2n,$ we have:
\begin{equation*}
\begin{aligned}
F_\gamma &= \gamma\left( a_1y_1\left( 1-\sum\limits_{i =2 }^n {x_i}-\sum\limits_{j = 1}^n {y_j}\right) +\sum\limits_{i = 2}^n {a_ix_iy_i}\right)+\left( {1 - \gamma } \right)\left(\sum\limits_{j ={k+1} }^n\left( b_1-b_j\right) y_j \right) \\
&-\left( 1-\gamma\right) \left(b_1+\sum\limits_{i = 2}^n\left( b_i-b_1\right) x_i+\sum\limits_{j = 1}^k\left( b_{n+j}-b_1\right) y_j\right).  \\
\end{aligned}
\end{equation*}

Since:\\
$\gamma\left( a_1y_1\left( 1-\sum\limits_{i =2 }^n {x_i}-\sum\limits_{j = 1}^n {y_j}\right) +\sum\limits_{i = 2}^n {a_ix_iy_i}\right)+\left( {1 - \gamma } \right)\left(\sum\limits_{j ={k+1} }^n\left( b_1-b_j\right) y_j \right)\geq 0,$\\
so:
\begin{equation*}
\begin{aligned}
\min F_\gamma &\geq -\left( 1-\gamma\right) \left(b_1+\sum\limits_{i = 2}^n\left( b_i-b_1\right) x_i+\sum\limits_{j = 1}^k\left( b_{n+j}-b_1\right) y_j\right)\\
&\geq-\left( 1-\gamma\right)\left( b_1+b_2-b_1\right)\\
&= -\left( 1-\gamma\right)b_2=F_\gamma\left( 0,1,0,\ldots,0\right).
\end{aligned}
\end{equation*}
\end{itemize}
\end{itemize}
\end{proof}
\begin{corollary} \label{2vs2} For $n=2$ we obtain the model $\left( {B \texttt{ vs } ((R^1,A^1),(R^2,A^2))} \right).$ Then the optimal fire allocation of $B$ is :\\
\begin{equation}
P ^* =
\begin{cases}
(1,0,0,0) \textrm{ if } b_{1}=\mathop {\max }\lbrace b_{1},b_{2},b_{3},b_{4}\rbrace,\\
(0,1,0,0) \textrm{ if } b_{2}=\mathop {\max }\lbrace b_{1},b_{2},b_{3},b_{4}\rbrace,\\
(0,0,1,0) \textrm{ if } b_{3}=\mathop {\max }\lbrace b_{1},b_{2},b_{3},b_{4}\rbrace,\\
(0,0,0,1) \textrm{ if } b_{4}=\mathop {\max }\lbrace b_{1},b_{2},b_{3},b_{4}\rbrace.
\end{cases}
\end{equation}
\end{corollary}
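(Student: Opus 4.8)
The plan is to derive this corollary directly from Theorem \ref{main} by specializing to $n = 2$. Theorem \ref{main} establishes that the optimal allocation is the vertex of the simplex $\mathcal{P}$ that concentrates the entire unit of fire on the single index $k = \mathop{\arg\max}\limits_{l = 1,\ldots,2n}\{b_l\}$, with the threatening rates $b_l$ as defined in \eqref{bi}. Putting $n = 2$ gives $2n = 4$, so the allocation is the four-tuple $(p_1,p_2,p_3,p_4)$ and the only rates involved are $b_1, b_2, b_3, b_4$.

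First I would record these four rates explicitly from \eqref{bi}: the ``$R$''-type rates $b_1 = \alpha_c^1 r_{R^1}$ and $b_2 = \alpha_c^2 r_{R^2}$ from the first line, together with the ``$A$''-type rates $b_3 = \frac{r_{A^1}(\alpha_c^1 - \alpha_d^1)R^1_0}{A_0^1}$ and $b_4 = \frac{r_{A^2}(\alpha_c^2 - \alpha_d^2)R^2_0}{A_0^2}$ from the second line with $j = 1, 2$. Then I would evaluate $k = \mathop{\arg\max}\limits_{l=1,\ldots,4}\{b_l\}$ in each of the four mutually exclusive cases and substitute the corresponding standard basis vector for $P^*$: $b_1$ maximal forces $k = 1$ and hence $P^* = (1,0,0,0)$; $b_2$ maximal forces $k = 2$ and $P^* = (0,1,0,0)$; and the remaining two cases give $(0,0,1,0)$ and $(0,0,0,1)$ respectively. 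This reproduces the four displayed lines of the corollary verbatim.

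Because the statement is a literal instance of the theorem, there is no substantive obstacle to overcome; the argument is a bookkeeping exercise of matching indices to unit vectors. The only subtlety worth a remark is the treatment of ties, where the maximum is attained by two or more of the $b_l$ and $\mathop{\arg\max}$ is not single-valued. In that boundary situation any vertex associated with a maximizing index is equally optimal, and I would note that the corollary's case analysis is to be read as covering the generic case of a unique maximizer, with the degenerate ties resolved by an arbitrary tie-breaking choice.
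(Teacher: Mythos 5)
Your proposal is correct and takes the same route as the paper, which states Corollary \ref{2vs2} without a separate proof precisely because it is the literal $n=2$ instance of Theorem \ref{main}: with $2n=4$ the optimal allocation is the standard basis vector indexed by $\mathop{\arg\max}\limits_{l=1,\ldots,4}\{b_l\}$, exactly as you spell out. Your remark on ties (that the corollary's cases overlap when the maximum is attained by several $b_l$, any maximizing vertex then being optimal) is a sensible clarification the paper leaves implicit.
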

In this case, since the Blue force $B$ actually fights against four
forces $R^1, R^2, A^1, A^2$, the combat theoretically consists of
four stages. These stages are illustrated in Figure \ref{h2}.
\begin{figure}
\centering
\includegraphics[scale=0.7]{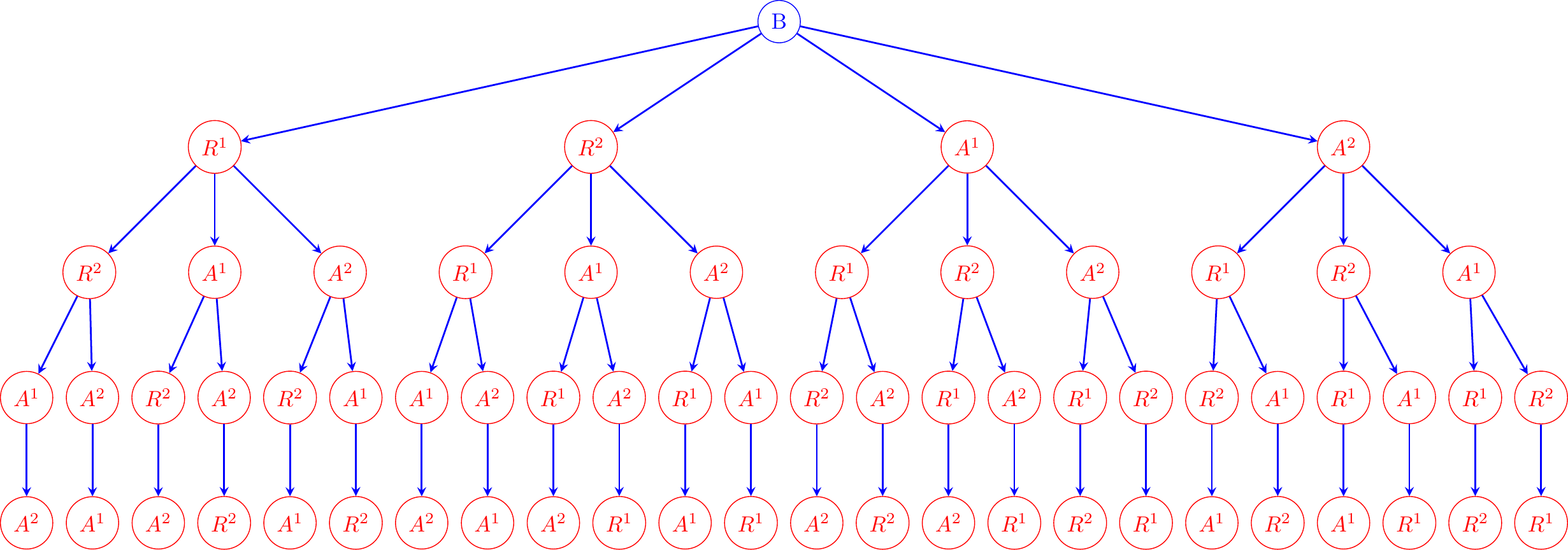}
\caption{Possible progresses of $\left( {B \texttt{ vs } ((R^1,A^1),(R^2,A^2))} \right).$}
\label{h2}
\end{figure}
The Blue force will focus its fire power to whichever possesses the largest threatening rate.
However, let us emphasize that if $R^1$ or $R^2$ is eliminated then
the Blue force doesn't have to fight $A^1$ or $A^2$. Moreover, if
both $R^1$ and $R^2$ are terminated, the combat ends promptly (since
the Blue force is no longer attrited). Therefore, the combat may
actually consists of two or three stages only. Actual stages are
depicted in Figure \ref{h3}.
\begin{figure}[hbtp]
\centering
\includegraphics[scale=0.7]{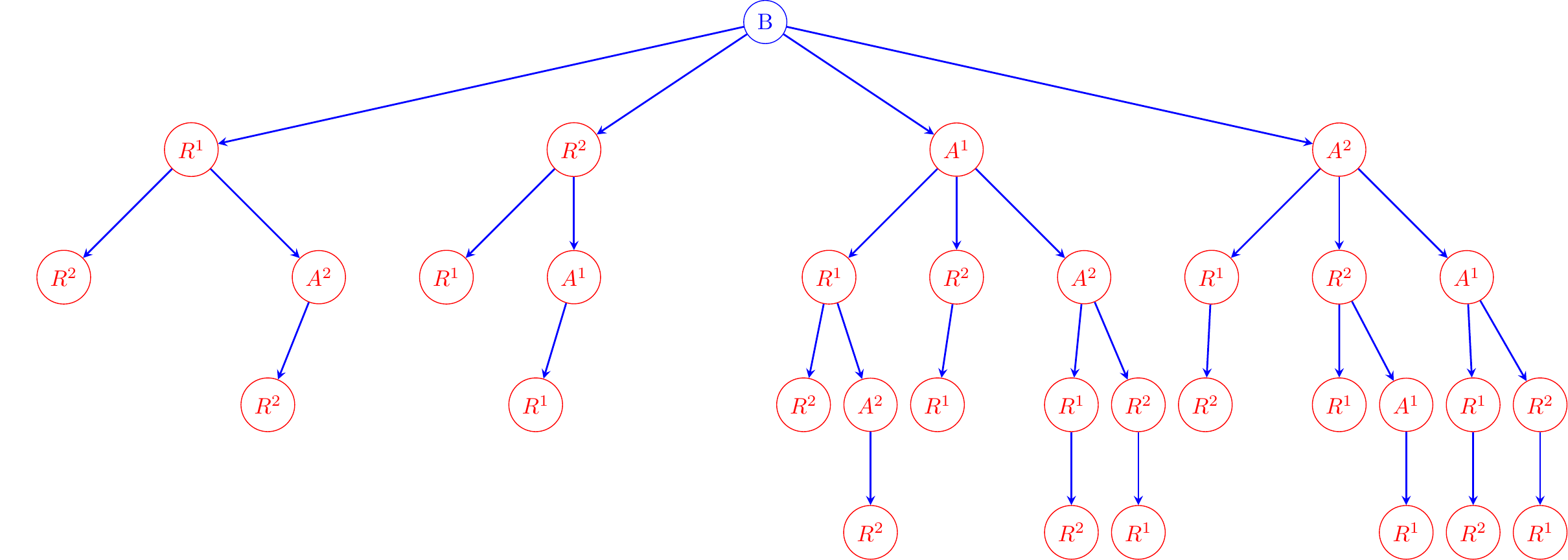}
\caption{Actual stages of the combat $\left( {B \texttt{ vs } ((R^1,A^1),(R^2,A^2))} \right).$}
\label{h3}
\end{figure}

\begin{corollary}\label{1vs1} For $n=1$ our model becomes $\left( {B \texttt{ vs } (R^1,A^1)} \right).$ Then the optimal fire allocation of $B$ is :\\
\begin{equation}
P ^* =
\begin{cases}
(1,0) \textrm{ if } b_{1}\geq b_{2},\\
(0,1) \textrm{ if } b_{1}\leq b_{2}.
\end{cases}
\end{equation}
\end{corollary}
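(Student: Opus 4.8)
The plan is to obtain this as the immediate specialization of Theorem~\ref{main} to $n=1$. When $n=1$ the feasible set reduces to $\mathcal{P}=\{(p_1,p_2):p_1,p_2\in[0,1],\ p_1+p_2=1\}$, and the index set entering the $\arg\max$ is simply $\{1,2\}$. Thus Theorem~\ref{main} already yields $P^*=(1,0)$ when $k=1$, i.e. when $b_1=\max\{b_1,b_2\}$, and $P^*=(0,1)$ when $k=2$, i.e. when $b_2=\max\{b_1,b_2\}$. The only thing left to reconcile is the tie $b_1=b_2$: there both corners achieve the same objective values and either is optimal, which is exactly why the statement overlaps the two branches using $b_1\geq b_2$ and $b_1\leq b_2$.

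For a self-contained verification — and to exhibit that the scalarization machinery of the proof of Theorem~\ref{main} is not even needed here — I would substitute $n=1$ directly into the coefficients $C_1,C_2$ of \eqref{d2C}. Writing $p_2=1-p_1$ and using \eqref{bi}, \eqref{ai}, one gets $C_1=a_1p_1(1-p_1)$ and $C_2=p_1b_1+(1-p_1)b_2=b_2+p_1(b_1-b_2)$. The key observation is that the two objectives no longer conflict: $C_1=a_1p_1(1-p_1)$ is a downward parabola in $p_1$ on $[0,1]$, hence minimized (equal to $0$) precisely at the endpoints $p_1\in\{0,1\}$; meanwhile $C_2$ is \emph{affine} in $p_1$, so its maximum over $[0,1]$ is likewise attained at an endpoint.

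It then suffices to compare the two candidate corners. At $p_1=1$ we have $C_1=0$ and $C_2=b_1$; at $p_1=0$ we have $C_1=0$ and $C_2=b_2$. Since $C_1$ is minimal at both, the choice is decided by $C_2$ alone: $p_1=1$ is optimal when $b_1\geq b_2$ and $p_1=0$ when $b_1\leq b_2$, which is the asserted $P^*$. Because both objectives are optimized simultaneously at the same corner, there is no genuine multi-objective trade-off in the case $n=1$; the only point requiring care is the boundary case $b_1=b_2$, where the two prescriptions coincide and either allocation maximizes $B(t)$. I expect this boundary bookkeeping to be the sole (and minor) subtlety, the rest being a direct reading-off from Theorem~\ref{main}.
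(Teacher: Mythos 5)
Your proposal is correct. Its first paragraph is exactly the paper's (implicit) route: the paper gives no separate proof of Corollary~\ref{1vs1}, treating it as the immediate specialization of Theorem~\ref{main} to $n=1$, with the overlapping conditions $b_1\geq b_2$ and $b_1\leq b_2$ absorbing the tie where the $\arg\max$ is not unique. Your second, self-contained argument is a genuine (and welcome) simplification that the paper does not record: substituting $n=1$ into the coefficients of \eqref{d2C} indeed gives $C_1=a_1p_1(1-p_1)$ and $C_2=b_2+p_1(b_1-b_2)$ via \eqref{bi} and \eqref{ai}, and since $C_1$ vanishes at both endpoints while the affine $C_2$ is maximized at an endpoint, the same corner simultaneously minimizes $C_1$ and maximizes $C_2$. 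This exhibits that for $n=1$ the multi-objective problem \eqref{moo} has a dominant solution, so the scalarization with $F_\gamma$ and the case analysis on the ordering of the $b_l$ are unnecessary; that machinery is only needed for $n\geq 2$, where the product terms $a_ix_iy_i$ couple the coordinates and no single corner need dominate a priori. The only caveat worth stating explicitly in your write-up is the one the paper also leaves tacit: optimality of a corner is here understood through the criterion the paper adopts (minimize $C_1$, maximize $C_2$ in the expression for $B(t)$), and under that criterion your endpoint comparison settles every case, including the boundary case $b_1=b_2$.
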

This result has been established in the work of Kim (\cite{Kim}).
\section{ Numerical illustrations}
In this section, we will present numerical experiments for three typical cases of Corollary \ref{2vs2}.
\subsection{Experiment 1: The Armed forces are attacked first}
Let us consider equation \eqref{model} with the following parameters:
\begin{table}[h]
\centering
\begin{tabular}{|c|c|c| }
\hline $(\alpha_d^1, \alpha_c^1)$ &$(\alpha_d^2, \alpha_c^2)$ &
$(r_{R^1}, r_{R^2},r_{A^1},r_{A^2}) $ \\
\hline
$(0.4,0.8)$&$(0.5,0.7)$&$(0.4,0.35,0.3,0.4)$\\
\hline
\end{tabular}
\caption{Parameters for Experiment 1.}
\end{table}\\
together with the following initial conditions: $${{B}_{0}}=350;{R}_{0}^1=120;\,\,R_0^2=100;\,{A}_{0}^1=50;\, A_0^2=60.$$  For these parameters, "threatening rates" are computed as
$$
{b}_1=0.32;\,\,{b}_2=0.245;\,{b}_3=0.288;\,{b}_4=0.133.
$$
Applying Corollary \ref{2vs2}, we conclude that the Blue force will focus its power to attack $R^1$ in the first stage. After the first stage, $R^1$ is eliminated and therefore $A^1$ is not necessarily to be attacked. For the second stage, applying Corollary  \ref{1vs1}, the Blue force will concentrate its fire power to $R^2$. Concluding $R^2$, the battle ends.
To sum up, the optimal fire allocation for $B$ is
$$
P^*=(1,0,0,0) \rightarrow (0,1,0,0).
$$
We compare the optimal fire allocation with the strategy
$${{P }_{1}}=\left(0, 1,0,0 \right)\to \left(1, 0,0,0 \right).$$
The allocation of $P_1$ is similar to $P^*$ but in reverse order. Another allocation used to compare is
$$
P_2=(0,0,1,0)\rightarrow (0,1,0,0) \rightarrow (1,0,0,0).
$$
The progress of the battles  with three fire allocations and number of Blue's troops are reflected in Figure \ref{th1}.
\begin{figure}[hbtp]
\centering
\includegraphics[scale=0.5]{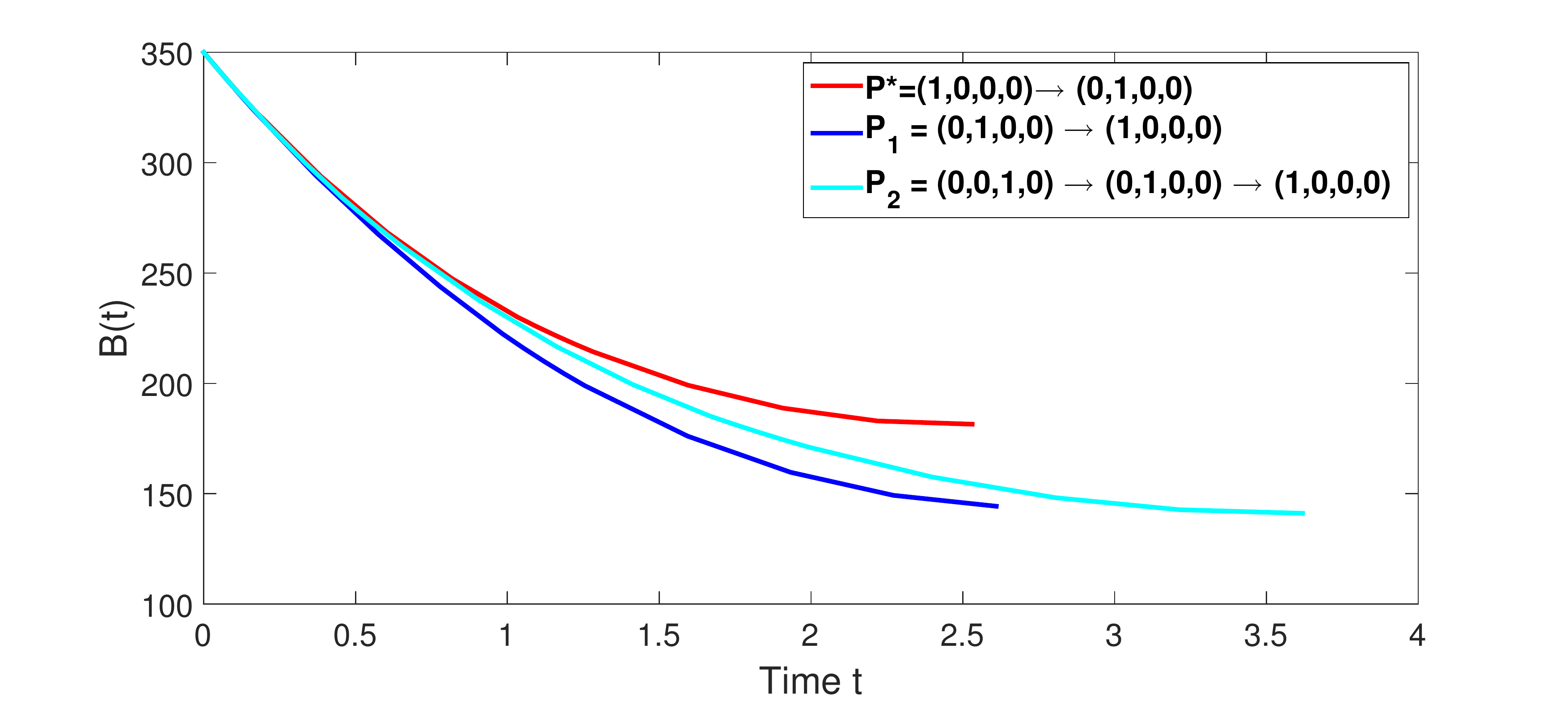}
\caption{Number of Blue's troops vs Time in Experiment 1.}
\label{th1}
\end{figure}
\subsection{Experiment 2: One of the supply agents is attacked first}
Let us consider equation \eqref{model} with the following parameters:
\begin{table}[h]
\centering
\begin{tabular}{|c|c|c| }
\hline $(\alpha_d^1, \alpha_c^1)$ &$(\alpha_d^2, \alpha_c^2)$ &
$(r_{R^1}, r_{R^2},r_{A^1},r_{A^2}) $ \\
\hline
$(0.4,0.8)$&$(0.3,0.7)$&$(0.4,0.5,0.3,0.4)$\\
\hline
\end{tabular}
\caption{Parameters for Experiment 2.}
\end{table}\\
together with the following initial conditions: $${{B}_{0}}=350;{R}_{0}^1=120;\,\,R_0^2=150;\,{A}_{0}^1=50;\, A_0^2=40.$$  For these parameters, "threatening rates" now become
$$
{b}_1=0.32;\,\,{b}_2=0.35;\,{b}_3=0.288;\,{b}_4=0.6.
$$
By Corollary \ref{2vs2}, we derive that the Blue force should concentrate its power to attack $A^2$ in the first stage. After the first stage, $A^2$ is excluded, and the threatening rates must be recalculated as follows:
$$
b_1=0.32; \, b_2=0.15;\,b_3=0.288.
$$
From this observation, the strategy for the second stage is concentrating firepower to $R^1$. When $R^1$ is got rid of, $A^1$ doesn't need to be eliminated and $R^2$ must obviously be targeted in the third stage.
In conclusion, the optimal fire allocation for $B$ is
$$
P^*=(0,0,0,1) \rightarrow (1,0,0,0) \rightarrow (0,1,0,0).
$$
We now measure the differences between $P^*$  and two fire allocations
$${{P }_{1}}=\left(1,0,0,0 \right)\to \left(0,1,0,0 \right)$$
and
$${{P }_{2}}=\left(0,0,1,0 \right)\to \left(1,0,0,0 \right).$$
The allocation of $P_1$ is similar to $P^*$ but it does not include an offence against $A^2$. On the other hand, $P_2$ indicates that Blue force will choose to attack supply agent $A^1$ instead of $A^2$ in the first stage and then focus on $R^1$. The advancement of the battles  depicted in Figure \ref{th2} demonstrate that the Blue force can still win with strategy $P_1$ but it loses with strategy $P_2$.

\begin{figure}[hbtp]
\centering
\includegraphics[scale=0.5]{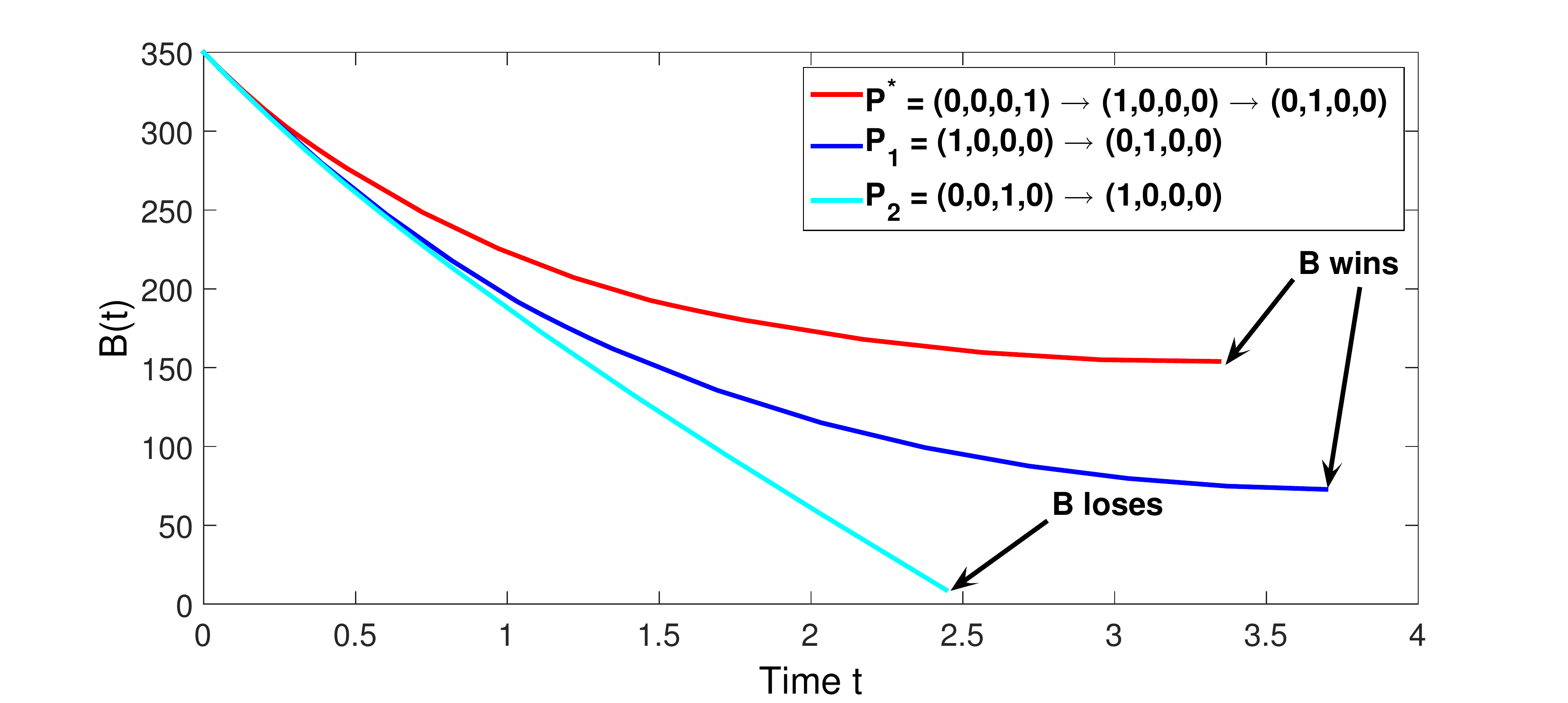}
\caption{Number of Blue's troops vs Time in Experiment 2.}
\label{th2}
\end{figure}

\subsection{Experiment 3: Both of the supply agents are attacked in early stages}
The following parameters are now use with our model:
\begin{table}[h]
\centering
\begin{tabular}{|c|c|c| }
\hline $(\alpha_d^1, \alpha_c^1)$ &$(\alpha_d^2, \alpha_c^2)$ &
$(r_{R^1}, r_{R^2},r_{A^1},r_{A^2}) $ \\
\hline
$(0.4,0.8)$&$(0.3,0.7)$&$(0.2,0.5,0.3,0.4)$\\
\hline
\end{tabular}
\caption{Parameters for Experiment 3.}
\end{table}\\
together with the following initial conditions: $${{B}_{0}}=400;{R}_{0}^1=120;\,\,R_0^2=150;\,{A}_{0}^1=50;\, A_0^2=40.$$
Threatening rates from \eqref{bi} now become
$$
{b}_1=0.16;\,\,{b}_2=0.35;\,{b}_3=0.288;\,{b}_4=0.6.
$$
It follows that the Blue force should concentrate its power to attack $A^2$ in the first stage.
After the first stage, $A^2$ is extinguished, and the recalculated threatening rates are as follows:
$$
b_1=0.16; \, b_2=0.15;\,b_3=0.288.
$$
The supply agent $A^1$ will therefore be targeted in the second stage.
After two stages, supply agents are left out of the picture and the armed force $R^1$ should be targeted in the third stage since it possesses higher threatening rate.
In conclusion, the optimal fire allocation for $B$ is
$$
P^*=(0,0,0,1) \rightarrow (0,0,1,0) \rightarrow (1,0,0,0) \rightarrow (0,1,0,0).
$$
In order to justify the optimality of $P^*$, the following fire allocations will be brought into comparison
$${{P }_{1}}=\left(0,0,1,0 \right)\rightarrow \left(1,0,0,0 \right) $$
and
$${{P }_{2}}=\left(0,0,0,1 \right)\rightarrow (0,1,0,0) \rightarrow (0,0,1,0) \rightarrow \left(1,0,0,0 \right).$$
The allocation of $P_1$ is opposing to $P^*$ as it indicates that the Blue force will target $A^1$ instead of $A^2$ in the first stage and the armed force $R^1$ instead of a supply agent in the second stage. Whereas, $P_2$ is similar to the optimal fire allocation $P^*$. It demonstrates that Blue force will choose to attack supply agent $A^2$  in the first stage, just like $P^*$, and then focus on $R^2,A^1,R^1$ in the later stages, respectively. The development of the battles portrayed in Figure \ref{th3} demonstrate that the Blue force can still win with strategy $P_2$ but it loses with strategy $P_1$.
\begin{figure}[hbtp]
\centering
\includegraphics[scale=0.5]{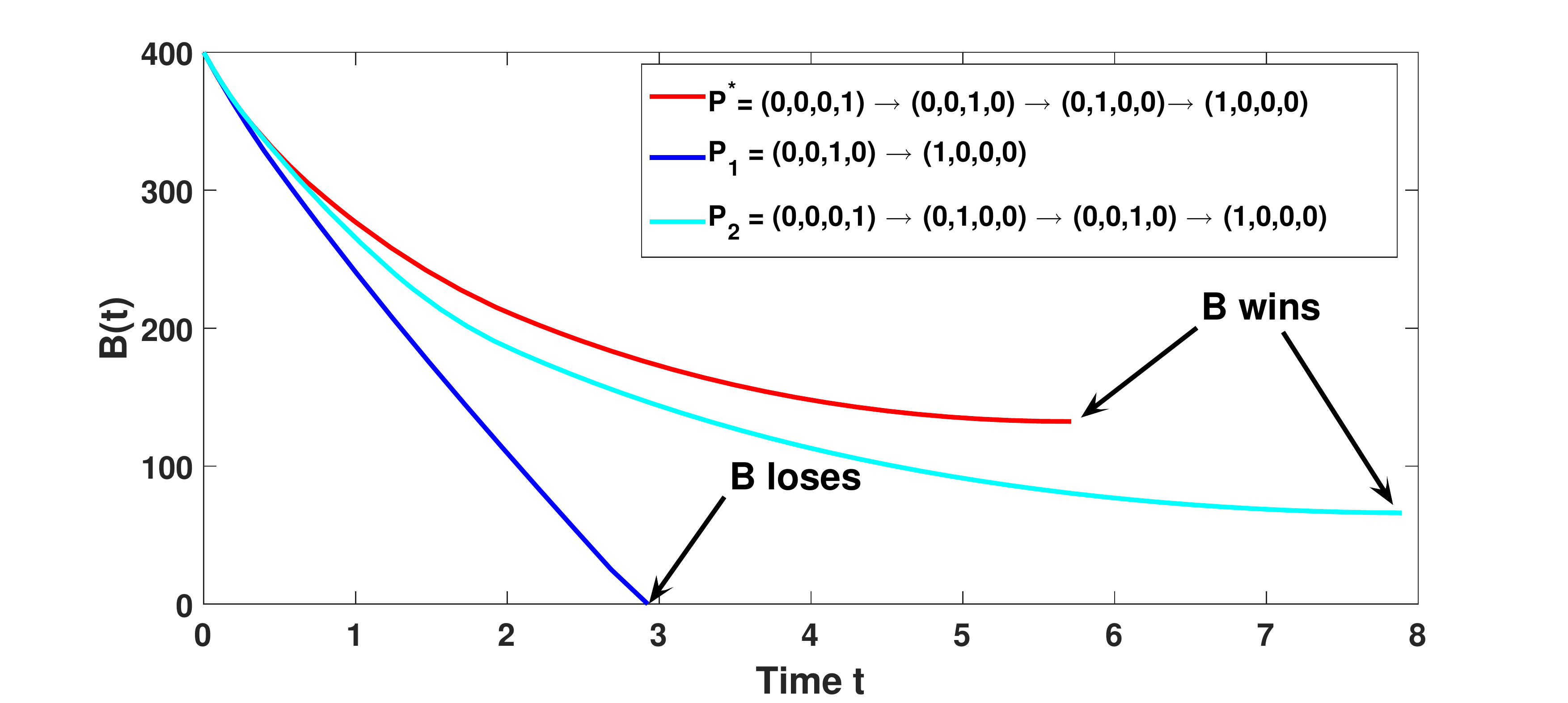}
\caption{Number of Blue's troops vs Time in Experiment 3.}
\label{th3}
\end{figure}
\section{Conclusion}
In this work, we have introduced a novel model for battle with
supplies. Computing the "threatening rates" of the Red party's
entities, we managed to show the optimal fire allocation of the Blue
party. Basically, the Blue force will target entity possessing the largest "threatening rate".
These results have improved and generalized some known
results in this field.
\section*{References}
\nocite{*}

\end{document}